\documentclass[letter]{amsart}
\usepackage[mathscr]{euscript}
\usepackage{graphicx}
\usepackage{amsfonts}
\usepackage{amscd}
\usepackage{amssymb}
\usepackage{amsthm}
\usepackage{mathtools}
\usepackage{xypic}
\xyoption{all}
\usepackage{verbatim}
\usepackage{hyperref}
\usepackage{url}
\usepackage{tikz}
\usepackage{tikz-cd}
\usepackage{calligra}
\usetikzlibrary{shapes}
\usetikzlibrary{decorations.markings}
\usepackage{multirow}
\usepackage[savepos]{zref}
\setcounter{tocdepth}{1}

\vfuzz2pt 
\hfuzz2pt 
\newtheorem{theorem}{Theorem}[section]
\newtheorem{corollary}[theorem]{Corollary}

\newtheorem{proposition}[theorem]{Proposition}
\numberwithin{theorem}{section}
\numberwithin{equation}{section}


\usepackage[mathscr]{euscript}
\usepackage[T2A,T1]{fontenc}


%
  {\begin{description}%
    \setlength{\itemsep}{2.5pt}%
    \setlength{\parskip}{5pt}}%
  {\end{description}}

\DeclareMathOperator{\ellip}{ell}

\DeclareMathOperator{\cInd}{cInd}

\DeclareMathOperator{\Span}{Span}

\DeclareMathOperator{\Ker}{Ker}
\DeclareMathOperator{\Cok}{Cok}

\newcommand{\From}{\colon}
\newcommand{\inar}{\ar@{^{(}->}}
\newcommand{\onar}{\ar@{->>}}

\usepackage{accents}
\newlength{\dtildeheight}

\makeatletter
\newcommand{\raisemath}[1]{\mathpalette{\raisem@th{#1}}}
\newcommand{\raisem@th}[3]{\raisebox{#1}{$#2#3$}}
\makeatother

\newcommand{\alg}[1]{\boldsymbol{\mathrm{#1}}}

\newcommand{\sheaf}[1]{{\mathscr{#1}}}

\newcommand{\ident}{\equiv}

\newcommand{\Into}{\hookrightarrow}
\newcommand{\Onto}{\twoheadrightarrow}
\newcommand{\To}{\rightarrow}

\makeatletter
\newcommand\@biprod[1]{%
  \vcenter{\hbox{\ooalign{$#1\prod$\cr$#1\coprod$\cr}}}}
\newcommand\biprod{\mathop{\mathpalette\@biprod\relax}\displaylimits}
\makeatother

\DeclareMathAlphabet{\mathcalligra}{T1}{calligra}{m}{n}

\DeclareMathOperator{\OR}{or}

\DeclareMathOperator{\uni}{uni}
\DeclareMathOperator{\Rank}{Rank}
\DeclareMathOperator{\cobdry}{d}


\begin{document}

\title{An induction theorem for groups acting on trees}%
\author{Martin H. Weissman}%
\address{Dept. of Mathematics, University of California, Santa Cruz, CA 95064}
\email{weissman@ucsc.edu}
\date{\today}
\subjclass[2010]{20G25, 20E08, 22E50}
\maketitle

\begin{abstract}
If $G$ is a group acting on a tree $X$, and $\sheaf{S}$ is a $G$-equivariant sheaf of vector spaces on $X$, then its compactly-supported cohomology is a representation of $G$.  Under a finiteness hypothesis, we prove that if $H_c^0(X, \sheaf{S})$ is an irreducible representation of $G$, then $H_c^0(X, \sheaf{S})$ arises by induction from a vertex or edge stabilizing subgroup.

If $G$ is a reductive group over a nonarchimedean local field $F$, then Schneider and Stuhler realize every irreducible supercuspidal representation of $G(F)$ in the degree-zero cohomology of a $G(F)$-equivariant sheaf on its reduced Bruhat-Tits building $X$.  When the derived subgroup of $G$ has relative rank one, $X$ is a tree.  An immediate consequence is that every such irreducible supercuspidal representation arises by induction from a compact-mod-center open subgroup.
\end{abstract}


\section{Introduction}
\label{intro}
According to a folklore conjecture, every irreducible supercuspidal representation of a reductive $p$-adic group arises by induction from a compact-mod-center open subgroup.  This is proven for $GL_n$ by Bushnell-Kutzko \cite{BK}, for many classical groups by Stevens \cite{Stevens}, and for tame supercuspidals by Ju-Lee Kim \cite{JLK} (exhaustion) and Jiu-Kang Yu \cite{JKY} (construction).  Outside of $GL_n$, these results require some assumptions on characteristic and residue characteristic.

Here we prove the conjecture for all groups of relative rank one -- those whose Bruhat-Tits building is a tree.  Our method is less constructive, but follows directly from results of Schneider-Stuhler \cite{SS} and the geometry of equivariant sheaves on trees.  No restrictions on residue characteristic (or characteristic!) are required, so the result seems new in many cases, e.g., for $SU_3$ in residue characteristic two and for quaternionic unitary groups.

There are nine classes of groups of relative rank one over a nonarchimedean local field $F$, if one uses the Tits index to organize them \cite[\S 4]{TitsCorvallis}.  These are conveniently tabulated and described in notes of Carbone \cite{Carbone}.  Their simply-connected forms are $SL_2(F)$ and $SL_2(D)$ (for $D$ a division algebra of any degree over $F$), the quasisplit unitary groups $SU_3^E$ and $SU_4^E$ ($E/F$ a separable quadratic field extension), and five types of quaternionic unitary groups $SU_2(D,s)$, $SU_3(D,s)$, $SU_3(D,h)$, $SU_4(D,h)$, $SU_5(D,h)$ which have absolute types $\mathrm{C}_2$, $\mathrm{C}_3$, $\mathrm{D}_3$, $\mathrm{D}_4$, and $\mathrm{D}_5$, respectively.   For these, $D$ denotes a quaternion division algebra over $F$, $h$ a nondegenerate Hermitian form, and $s$ a nondegenerate skew-hermitian form.  Previous results have addressed groups in three of these nine classes in a non-uniform manner, typically under restrictions on isogeny class, characteristic, and residue characteristic.

\subsection*{Acknowledgements}
The Simons Foundation Collaboration Grant \#426453 supported this work.   The author thanks Gordan Savin, Stephen DeBacker, Jeff Adler, Jeffrey Hakim, and Jessica Fintzen, for helpful discussions about representations of $p$-adic groups, buildings, and $K$-types.  The author also thanks Paul Broussous, who looked at an earlier version of the manuscript and  made helpful suggestions including a strengthening of the main theorem.

\section{Sheaves on Trees}
\label{sheavesontrees}
Let $X$ be a tree with vertex set $V$ and edge set $E$.  If $v \in V$ and $e \in E$, then we write $v < e$ to mean that $v$ is an endpoint of $e$.  Fix a field $k$.  A {\em sheaf} on $X$ will mean a cellular sheaf of $k$-vector spaces on $X$.  Such a sheaf consists of $k$-vector spaces $\sheaf{S}_v$ for every vertex $v \in V$, and $\sheaf{S}_e$ for every edge $e \in E$, and linear maps
$$\gamma_{v,e} \From \sheaf{S}_v \To \sheaf{S}_e$$
for all $v < e$.  The maps $\gamma_{v,e}$ are called {\em restriction maps} and the spaces $\sheaf{S}_v$, $\sheaf{S}_e$ are called the {\em stalks} of $\sheaf{S}$.  We write $(\sheaf{S}, \gamma)$ or sometimes just $\sheaf{S}$ for such a sheaf.

Let $G$ be a group acting on $X$.  A $G$-equivariant structure on a sheaf $(\sheaf{S}, \gamma)$ consists of linear maps
$$\eta_{g,v} \From \sheaf{S}_v \To \sheaf{S}_{gv}, \quad \eta_{g,e} \From \sheaf{S}_e \To \sheaf{S}_{ge}$$
for all $g \in G$, $v \in V$, $e \in E$, satisfying the following axioms.
\begin{itemize}
\item
For all $v \in V$, $e \in E$, the linear maps $\eta_{1,v}$ and $\eta_{1,e}$ are the identity.
\item
For all $g,h \in G$, $v \in V$, and $e \in E$, $\eta_{g,hv} \circ \eta_{h,v} = \eta_{gh,v}$ and $\eta_{g,he} \circ \eta_{h,e} = \eta_{gh,e}$.
\item
For all $g \in G$, and $v < e$, we have $\gamma_{gv,ge} \circ \eta_{g,v} = \eta_{g,e} \circ \gamma_{v,e}$.
\end{itemize}
A $G$-equivariant sheaf on $X$ will mean a sheaf $(\sheaf{S}, \gamma)$ endowed with a $G$-equivariant structure.

\subsection{Cohomology}

For convenience, fix an orientation on every edge $e \in E$.  This orders the endpoints of every edge $e$, and we write $x_e, y_e$ for the first and second endpoint, respectively.  If $v < e$, then write $\OR(v,e) = 1$ if $v = x_e$ and $\OR(v,e) = -1$ if $v = y_e$.

Fix a sheaf $(\sheaf{S}, \gamma)$ on $X$ in what follows.  If $v \in V$, and $s \in \sheaf{S}_v$, define
$$\cobdry s = \sum_{e > v} \OR(v,e) \cdot \gamma_{v,e}(s) \in \bigoplus_{e > v} \sheaf{S}_e.$$
The compactly-supported cohomology of $\sheaf{S}$ is then computed by the complex
$$0 \To \bigoplus_{v \in V} \sheaf{S}_v \xrightarrow{\cobdry} \bigoplus_{e \in E} \sheaf{S}_e \rightarrow 0.$$
With reference to the complex above,
$$H_c^0(X, \sheaf{S}) = \Ker \cobdry, \quad H_c^1(X, \sheaf{S}) = \Cok \cobdry.$$

If $\sheaf{S}$ is a $G$-equivariant sheaf, then the complex above and its cohomology inherit actions of $G$.  In particular, $H_c^i(X, \sheaf{S})$ is a representation of $G$ on a $k$-vector space for $i = 0,1$.

\subsection{The elliptic subsheaf}

Fix a  $G$-equivariant sheaf $(\sheaf{S}, \gamma, \eta)$ on $X$ in what follows.  If $v \in V$ and $s \in \sheaf{S}_v$, we say that $s$ is {\em elliptic} if $\cobdry s = 0$, i.e., if
$$\gamma_{v,e}(s) = 0 \text{ for all } e > v.$$
The elliptic elements of $\sheaf{S}_v$ form a subspace $\sheaf{S}_v^{\ellip} \subset \sheaf{S}_v$.  If $e$ is an edge, define $\sheaf{S}_e^{\ellip} = 0$.  This defines a $G$-equivariant subsheaf of $\sheaf{S}$, which we call the {\em elliptic subsheaf}
$$\sheaf{S}^{\ellip} \subset \sheaf{S}.$$
By construction, we have
\begin{equation}
H_c^0(X, \sheaf{S}^{\ellip}) = \bigoplus_{v \in V} \sheaf{S}_v^{\ellip}, \text{ and } H_c^1(X, \sheaf{S}^{\ellip}) = 0.
\end{equation}

For $x \in V$, let $G_x$ denote its stabilizer and $G \cdot x$ its orbit.  Then $\sheaf{S}_x$ is naturally (via $\eta$) a representation of $G_x$, and $\sheaf{S}_x^{\ellip}$ is a $G_x$-subrepresentation.  Algebraic induction gives a natural identification of $G$-representations,
$$\bigoplus_{v \in G \cdot x} \sheaf{S}_v^{\ellip} \ident \cInd_{G_x}^G \sheaf{S}_x^{\ellip}.$$
Above and in what follows, if $K$ is a subgroup of $G$ and $(\sigma, S)$ is a representation of $K$, $\cInd_K^G(S)$ denotes ``algebraic induction,'' i.e., the space of functions $f \From G \To S$ supported on a finite number of left $K$-cosets, satisfying $f(gk) = \sigma(k)^{-1} f(g)$ for all $g \in G$, $k \in K$.

It follows that $H_c^0(X, \sheaf{S}^{\ellip})$ is a direct sum of such induced representations.
\begin{equation}
\label{EllInd}
H_c^0(X, \sheaf{S}^{\ellip}) \ident \bigoplus_{G \cdot x \in G \backslash V} \cInd_{G_x}^G \sheaf{S}_x^{\ellip}.
\end{equation}

\subsection{The unifacial subsheaf}

Suppose now that $\sheaf{S}^{\ellip} = 0$.  If $v \in V$ and $s \in \sheaf{S}_v$, we say that $s$ is {\em unifacial} if there exists a {\em unique} edge $e > v$ such that $\gamma_{v,e}(s) \neq 0$.  Define
$$\sheaf{S}_{v,e}^{\uni} = \Span_k \{ s \in \sheaf{S}_v : s \text{ is unifacial and } \gamma_{v,e}(s) \neq 0 \}.$$ 
Since we assume $\sheaf{S}^{\ellip} = 0$, we find that
$$\sheaf{S}_{v,e}^{\uni} = \{ s \in \sheaf{S}_v : s \text{ is unifacial and } \gamma_{v,e}(s) \neq 0 \} \sqcup \{ 0 \}.$$ 
The spaces $\sheaf{S}_{v,e}^{\uni}$, for various $e$, are linearly independent in $\sheaf{S}_v$.  Hence putting these spaces together for various edges, we define
\begin{equation}
\label{diruni}
\sheaf{S}_v^{\uni} = \Span_k \{ s \in \sheaf{S}_v : s \text{ is unifacial} \} = \bigoplus_{e > v} \sheaf{S}_{v,e}^{\uni}.
\end{equation}

If $e$ is an edge with endpoints $x,y$, define
$$\sheaf{S}_e^{\uni} = \gamma_{x,e}(\sheaf{S}_{x,e}^{\uni}) + \gamma_{y,e}(\sheaf{S}_{y,e}^{\uni}) \subset \sheaf{S}_e.$$
The spaces $\sheaf{S}_v^{\uni}$ and $\sheaf{S}_e^{\uni}$ define a $G$-equivariant subsheaf $\sheaf{S}^{\uni} \subset \sheaf{S}$, whose cohomology can be described explicitly.

\begin{proposition}
\label{UniVanish}
$H_c^1(X, \sheaf{S}^{\uni}) = 0$.
\end{proposition}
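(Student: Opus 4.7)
Since $H_c^1(X,\sheaf{S}^{\uni})$ is by definition the cokernel of $\cobdry \From \bigoplus_v \sheaf{S}_v^{\uni} \to \bigoplus_e \sheaf{S}_e^{\uni}$, the task is purely one of showing surjectivity of this restricted coboundary map. The strategy is to write an arbitrary element of the codomain as a sum of ``elementary'' contributions concentrated on a single edge, and show each such contribution is hit by $\cobdry$ of a single unifacial vertex section.

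\textbf{Key observation.} If $v \in V$ and $s \in \sheaf{S}_{v,e}^{\uni}$, then by definition $s$ is unifacial with distinguished edge $e$, i.e.\ $\gamma_{v,e'}(s) = 0$ for every edge $e' > v$ with $e' \ne e$. Consequently
\[
\cobdry s \;=\; \sum_{e' > v} \OR(v,e') \cdot \gamma_{v,e'}(s) \;=\; \OR(v,e) \cdot \gamma_{v,e}(s),
\]
an element supported entirely on the edge $e$. In other words, $\cobdry$ carries $\sheaf{S}_{v,e}^{\uni}$ into the $e$-summand of $\bigoplus_e \sheaf{S}_e^{\uni}$, and it does so via $\OR(v,e)$ times the restriction map $\gamma_{v,e}$.

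\textbf{Surjectivity.} Fix an edge $e$ with endpoints $x_e, y_e$. By the definition of $\sheaf{S}_e^{\uni}$, any $t \in \sheaf{S}_e^{\uni}$ may be written $t = \gamma_{x_e,e}(s_1) + \gamma_{y_e,e}(s_2)$ with $s_1 \in \sheaf{S}_{x_e,e}^{\uni}$ and $s_2 \in \sheaf{S}_{y_e,e}^{\uni}$. Setting
\[
\tilde s \;=\; \OR(x_e,e)\cdot s_1 \;+\; \OR(y_e,e)\cdot s_2 \;\in\; \sheaf{S}_{x_e}^{\uni} \oplus \sheaf{S}_{y_e}^{\uni} \;\subset\; \bigoplus_{v \in V} \sheaf{S}_v^{\uni},
\]
the key observation applied to each summand yields $\cobdry \tilde s = t$ in the $e$-summand and zero on every other edge. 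Since $\bigoplus_e \sheaf{S}_e^{\uni}$ is a direct sum, an arbitrary element is a finite sum of contributions from single edges, and the construction above hits each such contribution. Hence $\cobdry$ is surjective and $H_c^1(X,\sheaf{S}^{\uni}) = 0$.

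\textbf{Remarks on difficulty.} There is no substantive obstacle here: the entire content of the proposition is encoded in the definition of unifacial sections, which are engineered so that their coboundaries concentrate on a single edge. The only subtlety worth noting is the implicit use of the hypothesis $\sheaf{S}^{\ellip} = 0$, which guarantees that $\sheaf{S}_{v,e}^{\uni}$ is genuinely a linear subspace of $\sheaf{S}_v$ (as observed just before \eqref{diruni}), so that the sign-adjusted sums $\OR(v,e) s$ still lie in $\sheaf{S}_v^{\uni}$ and the computation above makes sense.
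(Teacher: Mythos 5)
Your proof is correct and follows essentially the same route as the paper: both arguments write an element of $\sheaf{S}_e^{\uni}$ as $\gamma_{x_e,e}(a)+\gamma_{y_e,e}(b)$ and observe that the preimage $a-b$ (your sign-adjusted $\tilde s$ is exactly this, since $\OR(x_e,e)=1$ and $\OR(y_e,e)=-1$) has coboundary supported only on $e$. The extra bookkeeping with $\OR$ and the remark about $\sheaf{S}^{\ellip}=0$ are fine but not needed beyond what the paper already records.
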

\begin{proof}
We demonstrate that $\cobdry$ is surjective as follows.  Suppose $e \in E$ and $s \in \sheaf{S}_e^{\uni}$.  Write $x = x_e$ and $y = y_e$.  Then there exists $a \in \sheaf{S}_{x,e}^{\uni}$ and $b \in \sheaf{S}_{y,e}^{\uni}$ such that 
$$s = \gamma_{x,e}(a) + \gamma_{y,e}(b).$$
Since $\gamma_{x,e'}(a) = 0$ and $\gamma_{y,e'}(b) = 0$ for all $e' \neq e$, we find that $s = \cobdry(a - b)$.  Hence $\cobdry$ is surjective and $H_c^1(X, \sheaf{S}^{\uni})$ vanishes.
\end{proof}

\begin{proposition}
$H_c^0(X, \sheaf{S}^{\uni}) = \bigoplus_{e \in E} \left( \gamma_{x_e,e}(\sheaf{S}_{x_e,e}^{\uni}) \cap \gamma_{y_e,e}(\sheaf{S}_{y_e,e}^{\uni}) \right)$.
\end{proposition}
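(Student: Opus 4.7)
My plan is to compute $\Ker \cobdry$ directly on $\bigoplus_{v \in V} \sheaf{S}_v^{\uni}$ by re-indexing the source of $\cobdry$, via (\ref{diruni}), in terms of vertex--edge incidences. Concretely, I would rewrite
$$\bigoplus_{v \in V} \sheaf{S}_v^{\uni} = \bigoplus_{v < e} \sheaf{S}_{v,e}^{\uni},$$
so that a typical $0$-cochain is a tuple $a = (a_{v,e})_{v < e}$ with $a_{v,e} \in \sheaf{S}_{v,e}^{\uni}$, and then pull $\cobdry$ apart componentwise.

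Next I would make $\cobdry$ explicit on this decomposition. By the definition of $\sheaf{S}_{v,e}^{\uni}$, each element $a_{v,e}$ is a $k$-linear combination of unifacial vectors whose unique surviving edge is $e$, so $\gamma_{v,e'}(a_{v,e}) = 0$ for every $e' \neq e$. Thus $\cobdry a_{v,e}$ lies in the $e$-summand alone, and summing over $(v,e)$ shows that the $e$-component of $\cobdry a$ equals $\gamma_{x_e,e}(a_{x_e,e}) - \gamma_{y_e,e}(a_{y_e,e})$. Consequently $a \in \Ker \cobdry$ if and only if, for every edge $e$, one has $\gamma_{x_e,e}(a_{x_e,e}) = \gamma_{y_e,e}(a_{y_e,e})$, and this common value automatically lies in $\gamma_{x_e,e}(\sheaf{S}_{x_e,e}^{\uni}) \cap \gamma_{y_e,e}(\sheaf{S}_{y_e,e}^{\uni})$.

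Recording this common value edge by edge yields a $k$-linear map from $\Ker \cobdry$ to the claimed direct sum. Surjectivity is immediate, since each element of the intersection lifts on both sides by definition. The step I expect to be the only real obstacle is injectivity, which reduces to showing that $\gamma_{v,e}$ is injective on $\sheaf{S}_{v,e}^{\uni}$. Here the standing hypothesis $\sheaf{S}^{\ellip} = 0$ must be used: if $s \in \sheaf{S}_{v,e}^{\uni}$ satisfies $\gamma_{v,e}(s) = 0$, then by the unifaciality condition $\gamma_{v,e'}(s) = 0$ also for every other $e' > v$, so $s$ is elliptic and hence zero. Once this small injectivity observation is in place, $a_{x_e,e}$ and $a_{y_e,e}$ are uniquely recovered from the common value on each edge, and the proposition follows; everything else is bookkeeping on the coboundary.
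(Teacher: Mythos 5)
Your proof is correct, but it takes a genuinely different route from the paper. You compute $\Ker \cobdry$ head-on: using \eqref{diruni} to split each $\sheaf{S}_v^{\uni}$ into the pieces $\sheaf{S}_{v,e}^{\uni}$, observing that $\cobdry$ sends the $(v,e)$-piece only into the $e$-summand, and reading off that a cocycle is exactly a choice, for each edge $e$, of a common value $\gamma_{x_e,e}(a_{x_e,e}) = \gamma_{y_e,e}(a_{y_e,e})$ in the intersection; the only nontrivial point, as you correctly isolate, is that $\gamma_{v,e}$ is injective on $\sheaf{S}_{v,e}^{\uni}$, which follows from the standing hypothesis $\sheaf{S}^{\ellip} = 0$. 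The paper instead builds a short exact sequence of auxiliary sheaves $\sheaf{T} \Into \sheaf{R} \Onto \sheaf{S}^{\uni}$, where $\sheaf{R}$ decomposes into star-supported pieces with vanishing cohomology, and obtains the result as the connecting isomorphism $H_c^0(X, \sheaf{S}^{\uni}) \ident H_c^1(X, \sheaf{T}) = \bigoplus_e \sheaf{T}_e$. The two arguments use the same essential input (the splitting \eqref{diruni} and the injectivity of $\gamma_{v,e}$ on $\sheaf{S}_{v,e}^{\uni}$), but your version is more elementary and self-contained, while the paper's version produces the $G$-equivariant sheaf $\sheaf{T}$ as a byproduct, which is then used immediately in \eqref{UniInd} to recognize $H_c^0(X, \sheaf{S}^{\uni})$ as a sum of representations induced from edge stabilizers. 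If you use your direct isomorphism for that next step, you should note that the common value at $e$ is symmetric in the two endpoints, so the identification is independent of the chosen orientation and is $G_e$-equivariant edge by edge.
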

\begin{proof}
We define auxiliary sheaves $(\sheaf{R}, \rho)$ and $(\sheaf{T}, \tau)$ as follows.  For every vertex $v$, define $\sheaf{R}_v = \sheaf{S}_v^{\uni}$ and $\sheaf{T}_v = 0$.  For every edge $e$, with endpoints $x,y$, define 
$$\sheaf{R}_e = \gamma_{x,e}(\sheaf{S}_{x,e}^{\uni}) \oplus \gamma_{y,e}(\sheaf{S}_{y,e}^{\uni}), \quad \sheaf{T}_e = \gamma_{x,e}(\sheaf{S}_{x,e}^{\uni}) \cap \gamma_{y,e}(\sheaf{S}_{y,e}^{\uni}).$$
For $v < e$, define $\tau_{v,e} \From \sheaf{T}_v \To \sheaf{T}_e$ to be the zero map.  Define $\rho_{v,e} \From \sheaf{R}_v \To \sheaf{R}_e$ by $\rho_{v,e}(s) = \gamma_{v,e}(s)$, where the latter is viewed in the summand $\gamma_{v,e}(\sheaf{S}_{v,e}^{\uni})$ of $\sheaf{R}_e$.

There is a natural short exact sequence of sheaves on $X$,
$$\sheaf{T} \Into \sheaf{R} \Onto \sheaf{S}^{\uni}.$$
At vertices, the maps are obvious; for edges, the map $\sheaf{T}_e \To \sheaf{R}_e$ sends an element
$$t \in \gamma_{x,e}(\sheaf{S}_{x,e}^{\uni}) \cap \gamma_{y,e}(\sheaf{S}_{y,e}^{\uni})$$
to the ordered pair $(t, -t)$.  The map from $\sheaf{R}_e$ to $\sheaf{S}_e^{\uni}$ is addition.  

Essentially by construction, $H_c^0(X, \sheaf{R}) = H_c^1(X, \sheaf{R}) = 0$.  Indeed, we can decompose $\sheaf{R}$ as a product of sheaves,
$$\sheaf{R} = \prod_{v \in V} \sheaf{R}^{(v)},$$
where $\sheaf{R}^{(v)}$ is supported on the star-neighborhood of $v$:  $\sheaf{R}_v^{(v)} := \sheaf{R}_v = \sheaf{S}_v^{\uni}$ and $\sheaf{R}_e^{(v)} = \gamma_{v,e}(\sheaf{S}_{v,e}^{\uni})$ for all $e > v$.  Since $\gamma_{v,e} \From \sheaf{S}_{v,e}^{\uni} \To \gamma_{v,e}(\sheaf{S}_{v,e}^{\uni})$ is an isomorphism for all $v < e$ (recall $\sheaf{S}^{\ellip} = 0$), we find that
$$\cobdry \From \sheaf{R}_v^{(v)} \longrightarrow \bigoplus_{e > v} \sheaf{R}_e^{(v)}$$
is an isomorphism (see \eqref{diruni}).  Hence $H_c^i(X, \sheaf{R}^{(v)}) = 0$ for $i = 0,1$.  The compactly supported cohomology of $\sheaf{R}$ is the direct sum of these, which vanishes.

From the short exact sequence of sheaves, $\sheaf{T} \Into \sheaf{R} \Onto \sheaf{S}^{\uni}$, the long exact sequence in cohomology gives an identification,
$$H_c^0(X, \sheaf{S}^{\uni}) \ident H_c^1(X, \sheaf{T}).$$
Since $\sheaf{T}_v = 0$ for all vertices $v$, we find that 
$$H_c^1(X, \sheaf{T}) = \bigoplus_{e \in E} \sheaf{T}_e = \bigoplus_{e \in E} \left( \gamma_{x_e,e}(\sheaf{S}_{x_e,e}^{\uni}) \cap \gamma_{y_e,e}(\sheaf{S}_{y_e,e}^{\uni}) \right).$$
\end{proof}

A $G$-equivariant structure on $\sheaf{S}$ transports to $G$-equivariant structures on $\sheaf{T}$ and $\sheaf{R}$.  It follows that, for any edge $e \in E$, the space $\sheaf{T}_e$ is naturally a representation of the stabilizer $G_e$.  Therefore, we find an identification of representations of $G$,
\begin{equation}
\label{UniInd}
H_c^0(X, \sheaf{S}^{\uni}) \ident H_c^1(X, \sheaf{T}) \ident \bigoplus_{G \cdot e \in G \backslash E} \cInd_{G_e}^G \sheaf{T}_e.
\end{equation}

\subsection{Multifacial sheaves}

Now, suppose that $(\sheaf{S}, \gamma)$ is a sheaf on $X$ and $\sheaf{S}^{\ellip} = 0$ and $\sheaf{S}^{\uni} = 0$.  Thus, for every $v \in V$ and every nonzero $s \in \sheaf{S}_v$, there exist {\em at least two} edges $e,f \in E$ such that $v<e$ and $v<f$ and 
$$\gamma_{v,e}(s) \neq 0 \text{ and } \gamma_{v,f}(s) \neq 0.$$
We call such a sheaf {\em multifacial}.
\begin{proposition}
\label{MultiVanish}
If $\sheaf{S}$ is a multifacial sheaf, then $H_c^0(X, \sheaf{S}) = 0$.
\end{proposition}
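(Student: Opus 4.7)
The plan is to argue by contradiction. Suppose that $\sigma = (\sigma_v)_{v \in V}$ is a nonzero element of $H_c^0(X, \sheaf{S}) = \Ker \cobdry$. By definition of the direct sum, $\sigma$ has finite support, i.e., $\sigma_v = 0$ for all but finitely many $v$. Unpacking the definition of $\cobdry$ edge by edge, the condition $\cobdry \sigma = 0$ is equivalent to the gluing condition
$$\gamma_{x_e,e}(\sigma_{x_e}) = \gamma_{y_e,e}(\sigma_{y_e}) \quad \text{for every } e \in E.$$
The strategy is to use the multifaciality hypothesis to produce an infinite non-backtracking path in $X$ along which $\sigma$ never vanishes, contradicting finite support.

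Concretely, I would pick a vertex $v_0$ with $\sigma_{v_0} \neq 0$. Since $\sheaf{S}$ is multifacial, there are at least two edges at $v_0$ on which $\sigma_{v_0}$ restricts nontrivially; pick any one of them, call it $e_1$, and let $v_1$ be its other endpoint. The gluing condition at $e_1$ forces $\gamma_{v_1,e_1}(\sigma_{v_1}) = \gamma_{v_0,e_1}(\sigma_{v_0}) \neq 0$, so in particular $\sigma_{v_1} \neq 0$. Inductively, suppose we have constructed distinct vertices $v_0, v_1, \ldots, v_i$ and edges $e_1, \ldots, e_i$ with $v_{j-1}, v_j$ the endpoints of $e_j$ and $\sigma_{v_j} \neq 0$ for all $j$. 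By multifaciality applied at $v_i$, there are at least two edges on which $\sigma_{v_i}$ has nonzero restriction; at least one of these, call it $e_{i+1}$, is not equal to $e_i$. Let $v_{i+1}$ be its other endpoint. The gluing condition at $e_{i+1}$ again yields $\sigma_{v_{i+1}} \neq 0$.

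Because the path $v_0, v_1, v_2, \ldots$ is non-backtracking (each $e_{i+1} \neq e_i$) and $X$ is a tree, the vertices $v_i$ are pairwise distinct — any repetition would produce a reduced closed loop, impossible in a tree. Hence $\sigma$ is nonzero at infinitely many vertices, contradicting its compact support. Therefore $H_c^0(X, \sheaf{S}) = 0$.

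The only real subtlety is the inductive step, where one must verify that one can always choose $e_{i+1} \neq e_i$; this is precisely where the hypothesis of \emph{at least two} nonvanishing restrictions (rather than just one) is indispensable. The tree property then upgrades non-backtracking to injectivity of the path, closing the argument.
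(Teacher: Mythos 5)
Your proof is correct, and it reaches the contradiction by a somewhat different route than the paper. The engine is the same in both arguments: the cocycle condition $\gamma_{x_e,e}(\sigma_{x_e}) = \gamma_{y_e,e}(\sigma_{y_e})$ propagates nonvanishing of stalk values across any edge on which the restriction is nonzero, and multifaciality guarantees that from any support vertex there is such an edge other than the one you arrived by. The difference is in how the finite-support hypothesis is violated. The paper argues extremally: it forms the convex hull $\Omega$ of the support, notes that $\Omega$ is a finite tree whose leaves lie in the support, and at a leaf uses multifaciality to push a nonzero restriction along an edge leaving $\Omega$; a single application of the cocycle condition then produces a support vertex outside $\Omega$, an immediate contradiction. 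You instead iterate the propagation step to build an infinite non-backtracking walk of support vertices and invoke the fact that a reduced walk in a tree visits pairwise distinct vertices. Each version leans on a different elementary fact about trees (a finite subtree has leaves, versus injectivity of reduced walks); yours dispenses with the convex hull at the cost of an induction and the reduced-walk lemma, and you correctly flag that the ``at least two edges'' hypothesis is exactly what lets the walk avoid backtracking. Both are complete proofs.
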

\begin{proof}
Suppose that $s = (s_v : v \in V) \in H_c^0(X, \sheaf{S})$.  Assume that $s \neq 0$.  Then there exists a nonempty finite set $W \subset V$ of vertices such that $s_v \neq 0$ if and only if $v \in W$.  Let $\Omega$ be the convex hull of $W$ in the tree $X$.  In other words, $\Omega$ is the smallest connected subgraph of $X$ containing every vertex from $W$.  In particular, $\Omega$ is a {\em finite tree}.  Moreover, if $\ell$ is a leaf of $\Omega$, then $\ell \in W$; otherwise one could prune the leaf while maintaining connectedness and containment of $W$.  

Let $\ell$ be a leaf of $\Omega$, so there is at most one edge of $\Omega$ having $\ell$ as an endpoint.  Since $\ell \in W$, we have $s_\ell \neq 0$.  Since $\sheaf{S}$ is multifacial, there exists an edge $e$ such that $\ell < e$, $e$ does not belong to $\Omega$, and $\gamma_{\ell,e}(s_\ell) \neq 0$.  Let $v$ be the other endpoint of $e$.  Since $\ell$ was a leaf of $\Omega$, $v \not \in \Omega$.

\begin{figure}[htbp!]
\begin{center}
\begin{tikzpicture}
\coordinate (ell) at (0,0);
\coordinate (v) at (-2,0);
\coordinate (n1) at (1,1);
\coordinate (n2) at (3,1);
\coordinate (n3) at (4.5,1);
\coordinate (n4) at (5,0);
\coordinate (n5) at (3.5, -1);
\coordinate (n6) at (2,-1);
\coordinate (n7) at (1.7,0);
\coordinate (n8) at (3.5,0);
\coordinate (p1) at (1.5,2);
\coordinate (p2) at (4, 1.5);
\coordinate (p3) at (2.5, 1.5);
\coordinate (p4) at (1, -1.5);
\filldraw[fill=black!10, draw=black!50] (0,-1) rectangle (5,1);
\draw (0.5, 0.5) node {\Large $\Omega$};
\draw (ell) -- (n7);
\draw (n7) -- (n1);
\draw (n7) -- (n2);
\draw (n7) -- (n6);
\draw (n7) -- (n8);
\draw (n8) -- (n3);
\draw (n8) -- (n4);
\draw (n8) -- (n5);
\draw (n1) -- (p1);
\draw (n2) -- (p2);
\draw (n2) -- (p3);
\draw (n6) -- (p4);
\draw (ell) to node[above] {$e$} (v);
\foreach \i in {1,...,8}
{
\filldraw[fill=gray, draw=black] (n\i) circle (0.05);
}
\foreach \i in {1,...,4}
{
\filldraw[fill=gray, draw=black] (p\i) circle (0.05);
}
\filldraw[fill=gray, draw=black] (ell) circle (0.05) node[above left] {$\ell$};
\filldraw[fill=gray, draw=black] (v) circle (0.05) node[above left] {$v$};; 

\end{tikzpicture}
\end{center}
\caption{$\ell$ is a leaf of the finite tree $\Omega$ (the subgraph contained in the shaded box), and $e$ is an edge that protrudes outside of $\Omega$.}
\label{fig:1}
\end{figure}
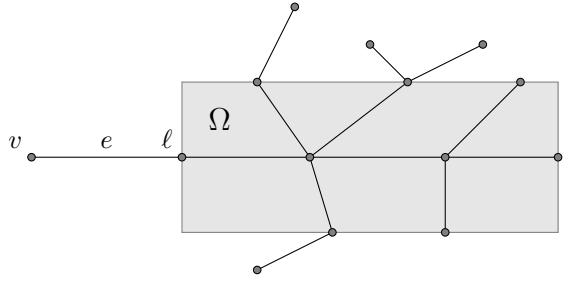

Since $\cobdry s = 0$, we have $(\cobdry s)_e = 0$.  Equivalently,
$$\gamma_{v,e}(s_v) - \gamma_{\ell,e}(s_\ell) = 0.$$
Since $\gamma_{\ell,e}(s_\ell) \neq 0$, this implies $\gamma_{v,e}(s_v) \neq 0$, which implies $s_v \neq 0$.  But this contradicts the fact that $v \not \in \Omega$.  

This contradiction proves that $H_c^0(X, \sheaf{S}) = 0$.
\end{proof}
  
\subsection{The Induction Theorem}

Suppose that $\sheaf{S}$ is a $G$-equivariant sheaf on $X$.  We define its {\em 0-rank} to be the cardinal number
$$\Rank^0(\sheaf{S}) = \sum_{G \cdot v \in G \backslash V} \dim(\sheaf{S}_v).$$
For example, if $G \backslash V$ is finite and every stalk of $\sheaf{S}$ is finite-dimensional, then $\Rank^0(\sheaf{S})$ will be finite.
\begin{theorem}
Assume that $\Rank^0(\sheaf{S})$ is finite.  If $H_c^0(X, \sheaf{S}) = 0$ or $H_c^0(X, \sheaf{S})$ is an irreducible representation of $G$, then $H_c^0(X, \sheaf{S})$ is isomorphic to a representation induced from the stabilizer of a vertex or edge.
\end{theorem}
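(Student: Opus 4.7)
The plan is to build a finite descending filtration of $\sheaf{S}$ by iteratively stripping off elliptic and unifacial subsheaves, then splice the resulting cohomology exact sequences to express $H_c^0(X, \sheaf{S})$ as a successive extension of direct sums of vertex- and edge-induced representations.

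Concretely, set $\sheaf{T}_0 = \sheaf{S}$ and define $\sheaf{T}_{i+1}$ by cases: if $\sheaf{T}_i^{\ellip} \neq 0$, let $\sheaf{T}_{i+1} = \sheaf{T}_i / \sheaf{T}_i^{\ellip}$; otherwise $\sheaf{T}_i^{\ellip} = 0$ so the unifacial subsheaf is defined, and if $\sheaf{T}_i^{\uni} \neq 0$ set $\sheaf{T}_{i+1} = \sheaf{T}_i / \sheaf{T}_i^{\uni}$; otherwise stop. At each nontrivial step $\Rank^0$ drops strictly, because the subsheaf being killed has a nonzero vertex stalk, so the finiteness hypothesis forces termination at some $\sheaf{T}_N$ with $\sheaf{T}_N^{\ellip} = \sheaf{T}_N^{\uni} = 0$, i.e., $\sheaf{T}_N$ is multifacial. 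Each step produces a short exact sequence $\sheaf{T}_i^{\star} \Into \sheaf{T}_i \Onto \sheaf{T}_{i+1}$ of $G$-equivariant sheaves, with $\star \in \{\ellip, \uni\}$; since $H_c^1(X, \sheaf{T}_i^{\ellip}) = 0$ (the elliptic subsheaf has trivial edge stalks) and $H_c^1(X, \sheaf{T}_i^{\uni}) = 0$ by Proposition \ref{UniVanish}, the long exact sequence collapses to
$$0 \to H_c^0(X, \sheaf{T}_i^{\star}) \to H_c^0(X, \sheaf{T}_i) \to H_c^0(X, \sheaf{T}_{i+1}) \to 0,$$
in which the first term is a direct sum of compact inductions from vertex or edge stabilizers by \eqref{EllInd} or \eqref{UniInd}. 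Proposition \ref{MultiVanish} gives $H_c^0(X, \sheaf{T}_N) = 0$, so splicing the sequences yields a finite filtration of $H_c^0(X, \sheaf{S})$ whose successive quotients are each direct sums of such compact inductions.

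The hypothesis then finishes the argument: if $H_c^0(X, \sheaf{S}) = 0$ the conclusion is trivial, and if $H_c^0(X, \sheaf{S})$ is irreducible then exactly one subquotient in the filtration is nonzero and coincides with $H_c^0(X, \sheaf{S})$, so it is a direct sum of compact inductions that is itself irreducible, forcing one nonzero summand $\cInd_K^G V$ with $K$ a vertex or edge stabilizer. The main obstacle I foresee is the alternating iteration itself: quotienting by the unifacial subsheaf can create new elliptic or unifacial classes downstairs, so a single elliptic-then-unifacial pass does not suffice, and the strictly decreasing $\Rank^0$---guaranteed by finiteness---is precisely what ensures the process halts in finitely many steps.
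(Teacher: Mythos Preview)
Your proof is correct and is essentially the paper's own argument, just unrolled: the paper proceeds by induction on $\Rank^0(\sheaf{S})$, applying irreducibility at each step to either terminate or pass to the quotient $\sheaf{S}/\sheaf{S}^{\uni}$, whereas you build the full filtration first and invoke irreducibility once at the end. The ingredients---Propositions~\ref{UniVanish} and~\ref{MultiVanish}, the identifications \eqref{EllInd} and \eqref{UniInd}, and the strict drop in $\Rank^0$---are identical, so the two presentations are reorganizations of the same proof.
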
  
\begin{proof}
We proceed by induction on $\Rank^0(\sheaf{S})$.  If $H_c^0(X, \sheaf{S}) = 0$, then $H_c^0(X, \sheaf{S})$ is induced from the zero representation via any subgroup, and the result is trivial.  This takes care of the $\Rank^0(\sheaf{S}) = 0$ base case, in particular.

So assume that $\Rank^0(\sheaf{S}) > 0$ and the result has been proven for lower 0-rank.  If $H_c^0(X, \sheaf{S}) = 0$, then we are done.  Otherwise, by Proposition \ref{MultiVanish}, we find that $\sheaf{S}^{\ellip} \neq 0$ or ($\sheaf{S}^{\ellip} = 0$ and $\sheaf{S}^{\uni} \neq 0$).  We consider these two cases below.

If $\sheaf{S}^{\ellip} \neq 0$, then $H_c^0(X, \sheaf{S}^{\ellip})$ is a nonzero subrepresentation of $H_c^0(X, \sheaf{S})$.  By irreducibility, we find that 
$$H_c^0(X, \sheaf{S}) \ident H_c^0(X, \sheaf{S}^{\ellip}).$$
By \eqref{EllInd}, this is a direct sum of representations induced from stabilizers of vertices.  By irreducibility again, only one $G$-orbit of vertices can support $\sheaf{S}^{\ellip}$ and
$$H_c^0(X, \sheaf{S}) \ident \cInd_{G_x}^G \sheaf{S}_x^{\ellip}$$
for some vertex $x \in V$.  Thus if $\sheaf{S}^{\ellip} \neq 0$, the result holds.

Next, suppose that $\sheaf{S}^{\ellip} = 0$ and $\sheaf{S}^{\uni} \neq 0$.  Consider the short exact sequence,
$$\sheaf{S}^{\uni} \Into \sheaf{S} \Onto \sheaf{S} / \sheaf{S}^{\uni}.$$
Since $\sheaf{S}^{\uni} \neq 0$, $\sheaf{S}_v^{\uni} \neq 0$ for some vertex $v$, and so $\Rank^0(\sheaf{S} / \sheaf{S}^{\uni}) < \Rank^0(\sheaf{S})$.

By Proposition \ref{UniVanish}, the long exact sequence in cohomology yields
$$0 \To H_c^0(X, \sheaf{S}^{\uni}) \To H_c^0(X, \sheaf{S} ) \To H_c^0(X, \sheaf{S} / \sheaf{S}^{\uni} ) \To 0.$$
This is a short exact sequence of $G$-representations, so irreducibility of the middle term yields
$$H_c^0(X, \sheaf{S}) \ident H_c^0(X, \sheaf{S}^{\uni}) \text{ or } H_c^0(X, \sheaf{S}) \ident H_c^0(X, \sheaf{S} / \sheaf{S}^{\uni} ).$$
In the first case, \eqref{UniInd} and irreducibility yields
$$H_c^0(X, \sheaf{S}) \ident \cInd_{G_e}^G \sheaf{T}_e$$
for some edge $e \in E$ and the result holds.  

In the second case, $H_c^0(X, \sheaf{S}) \ident H_c^0(X, \sheaf{S} / \sheaf{S}^{\uni} )$.  But $\sheaf{S} / \sheaf{S}^{\uni}$ has lower 0-rank.  Hence the second case follows from the theorem for sheaves of lower 0-rank. 
\end{proof}

\begin{corollary}
Let $\alg{G}$ be a reductive group over a nonarchimedean local field $F$, whose derived subgroup has relative rank one.  Let $G = \alg{G}(F)$.  Then every irreducible supercuspidal representation of $G$ on a complex vector space is isomorphic to $\cInd_K^G \sigma$ for some compact-mod-center open subgroup $K \subset G$ and some irreducible representation $\sigma$ of $K$.
\end{corollary}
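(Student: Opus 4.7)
The plan is to realize an irreducible supercuspidal representation $\pi$ of $G$ as $H_c^0(X, \sheaf{S})$ for a suitable $G$-equivariant cellular sheaf $\sheaf{S}$ on the reduced Bruhat-Tits building $X$ of $G$, and then apply the Induction Theorem directly to $\sheaf{S}$.

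First I would observe that, since $\alg{G}^{\der}$ has relative rank one over $F$, the reduced building $X$ is one-dimensional, hence a tree. The next step is to invoke the construction of Schneider-Stuhler: for every sufficiently large level $e$, they attach to each facet $\sigma$ of $X$ the space $\pi^{U_\sigma^{(e)}}$ of vectors fixed by a prescribed congruence subgroup, obtaining a $G$-equivariant cellular sheaf $\sheaf{S}$ on $X$ whose associated chain complex yields $H_c^0(X, \sheaf{S}) \ident \pi$.

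To apply the Induction Theorem, I would check that $\Rank^0(\sheaf{S})$ is finite. This follows because $G \backslash X$ is a finite simplicial complex (so there are finitely many orbits of vertices) and because admissibility of $\pi$ forces each stalk $\pi^{U_\sigma^{(e)}}$ to be finite-dimensional. The Induction Theorem then produces an isomorphism $\pi \ident \cInd_K^G \sigma$, where $K$ is either a vertex stabilizer $G_x$ or an edge stabilizer $G_e$, and $\sigma$ is the corresponding subquotient of the stalk (either $\sheaf{S}_x^{\ellip}$ in the elliptic case, or the intersection of images appearing in the unifacial case).

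Finally I would verify that $K$ is compact-mod-center and open, and that $\sigma$ is irreducible. Facet stabilizers in the reduced building are open (they contain a parahoric subgroup of finite index) and their images in $G/Z(F)$ are bounded, so $K$ is compact-mod-center open. Irreducibility of $\sigma$ is automatic: compact induction from an open subgroup is an exact additive functor, so any nontrivial decomposition of $\sigma$ would descend to one of $\pi$, contradicting irreducibility of $\pi$. I expect the main obstacle to be the correct deployment of the Schneider-Stuhler construction, in particular the verification that their equivariant coefficient system fits the cellular-sheaf formalism of this paper and that the level $e$ can be chosen so that $H_c^0(X, \sheaf{S})$ really recovers $\pi$; once that input is accepted, the corollary is essentially immediate from the Induction Theorem.
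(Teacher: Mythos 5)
Your proposal is correct and follows essentially the same route as the paper: realize $\pi$ as $H_c^0(X,\sheaf{S})$ via the Schneider--Stuhler coefficient system on the Bruhat--Tits tree and apply the Induction Theorem, with the stabilizer of a vertex or edge serving as the compact-mod-center open subgroup $K$. The extra details you supply (finiteness of $\Rank^0(\sheaf{S})$ from admissibility and finiteness of $G\backslash X$, and irreducibility of $\sigma$ from exactness of $\cInd_K^G$) are correct elaborations of points the paper leaves implicit.
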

\begin{proof}
Let $X$ be the Bruhat-Tits tree of $\alg{G}$ over $F$.  Then $G$ acts on $X$ with finitely many orbits on vertices and edges.  The stabilizer of any vertex or any edge is a compact-mod-center open subgroup of $G$; every compact-mod-center open subgroup is contained in such a stabilizer.  

Let $(\pi, V)$ be an irreducible supercuspidal representation of $G$.  In \cite[\S IV.1 and Theorem IV.4.17]{SS}, Schneider and Stuhler construct a $G$-equivariant sheaf $\sheaf{S}$ on $X$, with finite-dimensional stalks, such that $H_c^0(X, \sheaf{S}) \ident V$.  The result now follows from the previous theorem.
\end{proof} 

Note that Vigneras \cite[Theorem 4.6]{Vig} has proven that the main results of Schneider and Stuhler adapt to representations of $G$ on $R$-vector spaces, when $R$ is a field whose characteristic is coprime to the residue characteristic of $F$.  Hence the previous result holds for cuspidal (see \cite[\S 3.7]{Vig} for the definition) $R$-representations as well as for supercuspidal complex representations.

\bibliographystyle{amsalpha}
\bibliography{QRO}

\providecommand{\bysame}{\leavevmode\hbox to3em{\hrulefill}\thinspace}
\providecommand{\MR}{\relax\ifhmode\unskip\space\fi MR }
\providecommand{\MRhref}[2]{%
  \href{http://www.ams.org/mathscinet-getitem?mr=#1}{#2}
}
\providecommand{\href}[2]{#2}
\begin{thebibliography}{Kim07}

\bibitem[BK93]{BK}
Colin~J. Bushnell and Philip~C. Kutzko, \emph{The admissible dual of {${\rm
  GL}(N)$} via compact open subgroups}, Annals of Mathematics Studies, vol.
  129, Princeton University Press, Princeton, NJ, 1993. \MR{1204652}

\bibitem[Car]{Carbone}
Lisa Carbone, \emph{On the classification of rank 1 groups over non-archimedean
  local fields}, Lecture notes from a $p$-adic groups seminar at Harvard
  University.

\bibitem[Kim07]{JLK}
Ju-Lee Kim, \emph{Supercuspidal representations: an exhaustion theorem}, J.
  Amer. Math. Soc. \textbf{20} (2007), no.~2, 273--320. \MR{2276772}

\bibitem[SS97]{SS}
Peter Schneider and Ulrich Stuhler, \emph{Representation theory and sheaves on
  the {B}ruhat-{T}its building}, Inst. Hautes \'Etudes Sci. Publ. Math. (1997),
  no.~85, 97--191. \MR{1471867}

\bibitem[Ste08]{Stevens}
Shaun Stevens, \emph{The supercuspidal representations of {$p$}-adic classical
  groups}, Invent. Math. \textbf{172} (2008), no.~2, 289--352. \MR{2390287}

\bibitem[Tit79]{TitsCorvallis}
J.~Tits, \emph{Reductive groups over local fields}, Automorphic forms,
  representations and {$L$}-functions ({P}roc. {S}ympos. {P}ure {M}ath.,
  {O}regon {S}tate {U}niv., {C}orvallis, {O}re., 1977), {P}art 1, Proc. Sympos.
  Pure Math., XXXIII, Amer. Math. Soc., Providence, R.I., 1979, pp.~29--69.
  \MR{546588 (80h:20064)}

\bibitem[Vig97]{Vig}
Marie-France Vign\'eras, \emph{Cohomology of sheaves on the building and
  {$R$}-representations}, Invent. Math. \textbf{127} (1997), no.~2, 349--373.
  \MR{1427623}

\bibitem[Yu01]{JKY}
Jiu-Kang Yu, \emph{Construction of tame supercuspidal representations}, J.
  Amer. Math. Soc. \textbf{14} (2001), no.~3, 579--622. \MR{1824988}

\end{thebibliography}

\end{document}